\documentclass{amsart}
\usepackage{graphicx} 

\usepackage{preamble}
\DeclareMathOperator{\Sdepth}{Sdepth}
\DeclareMathOperator{\depth}{depth}
\DeclareMathOperator{\coker}{coker}
\DeclareMathOperator{\Ann}{Ann}
\DeclareMathOperator{\Ass}{Ass}
\DeclareMathOperator{\Ext}{Ext}
\DeclareMathOperator{\Rad}{Rad}
\title[Serre depth, generalized CM, and LLC]{$S_r$ properties of generalized Cohen--Macaulay modules and rings having liftable local cohomology via Serre depth}

\author{Haocheng Cai}

\begin{document}

\begin{abstract}
We study Serre depth via Matlis duals of local cohomology modules. We relate the Serre depth of a module to that of a quotient by a regular element, characterize the $S_r$ property for generalized Cohen--Macaulay modules in terms of ordinary depth, and show that the $S_r$ property descends to reduced structures under liftable local cohomology hypotheses.
\end{abstract}

\subjclass[2020]{13C14, 13D45, 13H10}
\keywords{Serre condition, Serre depth, local cohomology, generalized Cohen--Macaulay modules, liftable local cohomology}

\maketitle
\section{Introduction}
Cohen--Macaulay modules are important in commutative algebra and algebraic geometry. A well-known generalization is given by Serre's $S_r$ conditions. While checking Cohen--Macaulayness is relatively easy, checking $S_r$ properties is more laborious. The notion of Serre depth (Definition \ref{def:serre}) was recently introduced in \cite{PPTY,PPTY2}. However, several of its key properties have already appeared in \cite{Schenzel}, and were probably known to experts even earlier. The relation between Serre depth and the $S_r$ property can be exhibited using techniques in \cite{CM}; for details, see \cite{FicarraSerreDepth}.

An important obstacle for using the Cohen--Macaulay property is that the reduced structure of a Cohen--Macaulay scheme may no longer be Cohen--Macaulay; see Example \ref{ex:quotient}. In this paper, we show that this cannot happen under liftable local cohomology hypotheses; see Theorem \ref{thmintro:S_r} and Corollary \ref{cor:CM}. Liftable local cohomology was introduced and studied in \cite{KollarKovacsDeformLC} to prove the flatness of the cohomology sheaves of the dualizing complex. A very similar concept, called cohomologically full rings, is studied in \cite{LLC}. It was shown to be equivalent to $F$-fullness in characteristic $p$, and it includes many important families of singularities in characteristic 0, such as Cohen--Macaulay rings and rings with Du Bois singularities. Our proof for rings having liftable cohomology works also for cohomologically full rings.

\begin{theorem}\label{thmintro:S_r}
    Let $(T,n)$ be a local ring over a Noetherian ring $A$ with liftable local cohomology, and let $(R,m)$ be a local $A$-algebra such that $R/I\cong T$ with $I$ a nilpotent ideal. Assume in addition that $R$ is a homomorphic image of a Gorenstein ring and is equidimensional. If $R$ is $S_r$ over $R$, then $T$ is $S_r$ over $T$.
\end{theorem}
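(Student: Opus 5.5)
The plan is to reduce the statement to a comparison of Matlis duals of local cohomology, using the Serre-depth characterization of $S_r$ and the surjectivity given by liftable local cohomology.

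\textbf{Step 1: the criterion for $S_r$.} For a finitely generated module $M$ over a local ring that is a homomorphic image of a Gorenstein ring and is equidimensional, I will use the following criterion (Schenzel's, which underlies Serre depth, Definition \ref{def:serre}). Let $d=\dim M$ and let $K^i(M)$ denote the Matlis dual of $H^i_m(M)$; equivalently, after completion, $K^i(M)=\Ext^{N-i}_S(M,S)$ for a Gorenstein $S$ of dimension $N$ surjecting onto the ring. Then $M$ is $S_r$ if and only if
\[
\dim K^i(M)\le i-r \quad\text{for all } i<d,
\]
with the convention $\dim 0=-\infty$. In particular $K^i(M)=0$ for $i<r$. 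I will apply this to $R$, which is $S_r$ by hypothesis, so that $\dim K^i(R)\le i-r$ for all $i<\dim R$.

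\textbf{Step 2: $T$ satisfies the same standing hypotheses.} Since $I$ is nilpotent, $\operatorname{Spec}T$ and $\operatorname{Spec}R$ have the same underlying space. Hence $\dim T=\dim R=d$, and $T$ is equidimensional because $R$ is: the minimal primes and their dimensions coincide. Also $T=R/I$ is a homomorphic image of the same Gorenstein ring. The local cohomology $H^i_n(T)$ is the same whether computed over $T$ or over $R$, because $mT=n$ and $I$ is nilpotent. So the criterion of Step 1 applies to $T$, and its $K^i$ can be computed as $R$-modules.

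\textbf{Step 3: the key comparison.} By the definition of liftable local cohomology, applied to the surjection $R\to R/I\cong T$ of local $A$-algebras with nilpotent kernel, the natural maps $H^i_m(R)\to H^i_n(T)$ are surjective for all $i$. Matlis duality is exact and contravariant, so we get injections of finitely generated $\widehat R$-modules $K^i(T)\hookrightarrow K^i(R)$. Dimension does not increase under passing to submodules, and does not change when a $\widehat T$-module is viewed as a $\widehat R$-module, since $\widehat R\to\widehat T$ is surjective with nilpotent kernel. Therefore
\[
\dim K^i(T)\le \dim K^i(R)\le i-r \quad\text{for all } i<d.
\]
Since $\dim T=d$, Step 1 then gives that $T$ is $S_r$.

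\textbf{Expected obstacle.} The main point to check is the precise form of the criterion in Step 1. I need the version stated with the $K^i$ as modules over the completion, and I need to confirm it holds exactly under the hypotheses "image of a Gorenstein ring and equidimensional." I also need to make sure the conventions for $S_r$ agree: namely $\depth M_p\ge\min(r,\dim M_p)$, including the implicit requirement $\depth\ge\min(r,d)$ at the maximal ideal. If the paper's version is phrased via Serre depth, I would instead show $\Sdepth T\ge\Sdepth R$ directly from the same injections $K^i(T)\hookrightarrow K^i(R)$, and then compare with $r$. The same argument works verbatim for cohomologically full rings, since only the surjectivity of $H^i_m(R)\to H^i_n(T)$ is used.
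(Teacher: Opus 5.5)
Your proposal is correct and follows essentially the same route as the paper. The paper uses the surjections $H^i_m(R)\twoheadrightarrow H^i_n(T)$ to get $DH^i_m(T)\hookrightarrow DH^i_m(R)$, deduces $\dim R=\Sdepth^r_R(R)\le\Sdepth^r_R(T)\le\dim T=\dim R$, and then applies the forward direction of Theorem \ref{thm:SerreDepth}; this is exactly your inequality $\dim K^i(T)\le\dim K^i(R)\le i-r$ for $i<d$, written in Serre-depth language. One small point: your Step 2 check that $T$ is equidimensional and a Gorenstein image is unnecessary, since for $T$ you only use the direction from the dimension bounds to $S_r$, which holds without those hypotheses.
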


Another result in this paper is that we can read off an $S_r$ property of a generalized Cohen--Macaulay module (Definition \ref{def:gCM}) from its depth:

\begin{theorem}\label{introthm:gCM}
    Let $M$ be a generalized Cohen--Macaulay module over a local ring $(R,m,k)$ with $\dim(M)=n$. Then $M$ is $S_r$ where $r=\depth(M)$. Conversely, assume in addition that $M$ is equidimensional and $R$ is a homomorphic image of a Gorenstein ring. If $M$ is $S_r$ for some $r\leq n$, then $\depth(M)\geq r$.
\end{theorem}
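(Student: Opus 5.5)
We use the standard definition: $M$ is $S_r$ if $\depth M_p\geq\min(r,\dim M_p)$ for all $p\in\operatorname{Supp}M$. The two directions are handled separately.

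\emph{First direction.} Let $M$ be generalized Cohen--Macaulay, so $H^i_m(M)$ has finite length for $i<n$. A standard consequence is that for every prime $p\neq m$ in $\operatorname{Supp}M$ the localization $M_p$ is Cohen--Macaulay and $\dim M_p+\dim R/p=n$. If one prefers to avoid the catenarity issue, it suffices to use the Cohen--Macaulayness of $M_p$. Then $\depth M_p=\dim M_p\geq\min(r,\dim M_p)$ for $p\neq m$. At $p=m$ we have $\depth M=r\geq\min(r,n)$ trivially. Hence $M$ is $S_r$ with $r=\depth M$.

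To justify the Cohen--Macaulayness of $M_p$, pass to the completion and use local duality over a Gorenstein ring $S$ of dimension $d$ surjecting onto $\widehat R$. Since $H^i_m(M)$ has finite length for $i<n$, the modules $K^{j}:=\Ext_S^{d-j}(\widehat M,S)$ have finite length for $j<n$. Localizing at a prime $P\neq m$ kills them, so by local duality over $S_P$ we get $H^i_{P}(\widehat M_P)=0$ for $i<\dim\widehat M_P$. Equidimensionality of the support, i.e.\ $\dim \widehat M_P=n-\dim S/P$, also follows from the finiteness of these Ext modules. Faithful flatness $R_p\to\widehat R_P$ for $P$ lying over $p$ then descends the Cohen--Macaulay property to $M_p$. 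Alternatively, we can simply cite the well-known fact that generalized Cohen--Macaulay modules are Cohen--Macaulay on the punctured spectrum and equidimensional.

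\emph{Converse.} Now $R$ is a homomorphic image of a Gorenstein ring, $M$ is equidimensional and $S_r$ with $r\leq n$, and we want $H^i_m(M)=0$ for $i<r$. I would use the Serre depth characterization from \cite{Schenzel}/\cite{FicarraSerreDepth} recalled in the paper: under these hypotheses, $M$ is $S_r$ iff $\dim \Ext_S^{d-i}(M,S)\leq i-r$ for all $i<n$, with the convention that $\dim 0=-\infty$. Equivalently, via Matlis duals of local cohomology, $\dim H^i_m(M)^\vee\leq i-r$. For $i<r$ this bound is negative, so $H^i_m(M)=0$. For a generalized Cohen--Macaulay module these modules have finite length, hence dimension $\leq 0$, which is consistent, but the vanishing for $i<r$ is exactly what we need. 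This gives $\depth M\geq r$.

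If that characterization is not directly available in the paper, I would prove the needed implication directly by induction on $r$, cutting by an $M$-regular element $x$. Such an $x$ exists when $r\geq1$: $S_1$ together with equidimensionality implies $M$ has no embedded primes, so $m\notin\Ass M$ and $\depth M\geq1$. Then apply the regular-element result for Serre depth stated earlier in the paper, noting that $M/xM$ is again generalized Cohen--Macaulay and equidimensional and is $S_{r-1}$. The main obstacle is exactly this step: showing that $S_{r-1}$ passes to $M/xM$, which requires the Serre depth relation under quotients by regular elements. Equidimensionality is essential here, since otherwise lower-dimensional components break the Ext dimension count.
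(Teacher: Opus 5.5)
Your proof is correct. For the converse you follow the paper exactly: the converse part of Theorem~\ref{thm:SerreDepth} gives $\Sdepth^r(M)=n$, i.e.\ $\dim DH^i_m(M)\le i-r$ for all $i<n$. For $i<r$ this forces $H^i_m(M)=0$.

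For the forward direction your route is different.
\begin{itemize}
\item The paper never localizes. It notes that $DH^i_m(M)$ has finite length for $i<n$, so $\dim DH^i_m(M)\le 0<i-r+1$ for $r\le i<n$, while $DH^i_m(M)=0$ for $i<r=\depth M$. Hence $\Sdepth^r(M)=n$, and the forward part of Theorem~\ref{thm:SerreDepth}, which needs no extra hypotheses, gives $S_r$.
\item You instead check the defining inequality prime by prime. You use that a generalized Cohen--Macaulay module is Cohen--Macaulay on the punctured spectrum. You justify this by local duality over a Gorenstein (e.g.\ Cohen) presentation of $\widehat R$, followed by flat descent along $R_p\to\widehat R_P$. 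This is sound.
\end{itemize}
Your version is self-contained and shows why $r=\depth M$ is the right index: the only prime at which $S_r$ has any content is $m$. It also handles $\depth M=0$ uniformly, whereas $\Sdepth^0$ is not defined. The paper's version is a two-line computation, but only because the local-duality work is packaged inside Theorem~\ref{thm:SerreDepth}.

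The converse actually needs none of this machinery, in your proof or in the paper's. Applying the $S_r$ condition at $p=m$ gives
\[
\depth M=\depth M_m\ge\min(r,\dim M)=r
\]
directly. This uses neither generalized Cohen--Macaulayness, nor equidimensionality, nor the Gorenstein hypothesis. So the ``main obstacle'' you identify in your fallback induction does not exist, and the fallback can be dropped.
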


We also give a formula relating the Serre depth of $M$, denoted by $\Sdepth^r(M)$ (see Definition \ref{def:serre}) and that of $M/xM$ for a nonzerodivisor $x$; see Theorem \ref{thm:bertini}. The content of the theorem is that we interpret the inequalities in the definition of Serre depth as more detailed conditions $C(i)$; see Definition \ref{def:C(i)}. This interpretation is obtained by introducing a key dimension parameter $d_x(N)$; see Definition \ref{def:d_x} and its use in Lemma \ref{lem:dim} and Theorem \ref{thm:dim}.

\begin{theorem}\label{introthm:bertini}
    Let $(R,m,k)$ be a Noetherian local ring, $M$ a finitely generated $R$-module of dimension $n$, and $x$ an $M$-regular element in $m$. Let $\Sdepth^r(M)=s$. Then:
    \begin{enumerate}
        \item if none of the conditions $C(i)$ hold, then $\Sdepth^{r+1}(M/xM)=s-1$;
        \item otherwise, $\Sdepth^{r+1}(M/xM)$ is the smallest index $i$ such that $C(i)$ holds.
    \end{enumerate}
\end{theorem}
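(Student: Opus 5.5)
The plan is to compare the Matlis duals of the local cohomology modules of $M$ and $M/xM$ through the short exact sequence $0\to M\xrightarrow{x}M\to M/xM\to 0$. Throughout, I use the characterization of Serre depth from Definition~\ref{def:serre} in terms of the dimensions of the deficiency modules $K^i(N)=\mathrm{Hom}_R(H^i_m(N),E(k))$. I will take this to mean that $\Sdepth^r(N)$ is the smallest $i$ for which the inequality in the definition fails, i.e.\ for which $\dim K^i(N)$ exceeds the threshold allowed at level $r$ (roughly $\dim K^i(N)>i-r$); the precise normalization is whatever Definition~\ref{def:serre} fixes.

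\textbf{Step 1: the basic exact sequence.} Apply $H^\bullet_m$ to the short exact sequence and then apply Matlis duality, which is exact and contravariant. This yields, for every $i$, a short exact sequence
\[
0\to K^{i+1}(M)/xK^{i+1}(M)\to K^i(M/xM)\to (0:_{K^i(M)}x)\to 0 .
\]
Since all three modules are finitely generated over $\widehat R$, we get
\[
\dim K^i(M/xM)=\max\{\dim K^{i+1}(M)/xK^{i+1}(M),\ \dim(0:_{K^i(M)}x)\}.
\]
This is exactly where the parameter $d_x(N)$ of Definition~\ref{def:d_x} enters. It records whether $x$ cuts down the dimension of $N$ (so $\dim N/xN=\dim N-1$) or not (so $\dim N/xN=\dim N$), together with the analogous information for $0:_N x$. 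Lemma~\ref{lem:dim} and Theorem~\ref{thm:dim} then express $\dim K^i(M/xM)$ in terms of $\dim K^i(M)$, $\dim K^{i+1}(M)$ and the values $d_x(K^i(M))$, $d_x(K^{i+1}(M))$.

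\textbf{Step 2: comparing thresholds.} Write $s=\Sdepth^r(M)$. For $i<s-1$, both $K^i(M)$ and $K^{i+1}(M)$ satisfy the level-$r$ inequalities. The shift $r\mapsto r+1$ together with $i+1\mapsto i$ is designed so that the inequality for $K^{i+1}(M)$ at level $r$ becomes the inequality for the quotient term at level $r+1$, even when $x$ does not drop dimension. The $0:x$ term is controlled by $\dim K^i(M)$. Hence for $i<s-1$ the level-$(r+1)$ inequality for $K^i(M/xM)$ fails exactly when $C(i)$ holds; I expect $C(i)$ of Definition~\ref{def:C(i)} to be precisely the failure of the inequality after substituting the formula from Theorem~\ref{thm:dim}. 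At $i=s-1$, the term $K^{s}(M)$ violates the level-$r$ bound. By Step 1, $\dim K^{s-1}(M/xM)\ge\dim K^s(M)-1$, so the level-$(r+1)$ inequality must fail at $i=s-1$.

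\textbf{Step 3: conclusion.} Combining the two steps, $\Sdepth^{r+1}(M/xM)$ is the least $i\le s-1$ at which the inequality fails. If no $C(i)$ holds, this is $s-1$, giving part (1). Otherwise it is the smallest $i$ with $C(i)$, giving part (2); here I assume each $C(i)$ is only formulated for $i\le s-1$ in Definition~\ref{def:C(i)}.

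\textbf{Main obstacle.} The delicate step is the boundary and dimension bookkeeping in Step 2. One needs $\dim(0:_{K^i(M)}x)$ and $\dim K^{i+1}(M)/xK^{i+1}(M)$ to be governed exactly by $d_x$, including the cases where $x$ is not a parameter on $K^i(M)$, and one must handle the conventions $\dim 0=-\infty$ and the top index $i=n$. I would first settle Theorem~\ref{thm:dim} carefully, using that $\dim(0:_Nx)=\dim N$ iff $x$ lies in some $\mathfrak p\in\Ass N$ with $\dim R/\mathfrak p=\dim N$, and otherwise $\dim(0:_Nx)\le\dim N-1$. After that, the comparison is a direct case check against $C(i)$.
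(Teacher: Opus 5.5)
Your overall argument is the paper's. For $i<s-1$ you use that failure of the level-$(r+1)$ inequality for $DH^i_m(M/xM)$ is equivalent to $C(i)$; this is exactly Lemma~\ref{lem:iff}, a case check from Theorem~\ref{thm:dim}. At $i=s-1$ you use $\dim DH^{s-1}_m(M/xM)\geq \dim DH^s_m(M)-1\geq s-r$, just as the paper does. With that equivalence taken from Lemma~\ref{lem:iff}, the proof goes through.

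However, the heuristic you give in Step 2 to justify that equivalence is false, and you should drop it.
\begin{itemize}
\item For $i<s-1$ the level-$r$ bound gives only $\dim DH^{i+1}_m(M)\leq i-r+1$.
\item The level-$(r+1)$ inequality at index $i$ requires $\dim DH^i_m(M/xM)\leq i-r-1$.
\item The term $DH^{i+1}_m(M)/x\,DH^{i+1}_m(M)$ can have dimension up to $i-r+1$ (when $x$ does not drop dimension) or $i-r$ (when it does).
\end{itemize}
So the shift $r\mapsto r+1$, $i+1\mapsto i$ tightens the threshold by two, and this cokernel term \emph{can} cause failure. That is exactly where conditions (1) and (2) of $C(i)$ come from. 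Failure through the cokernel happens if and only if $\dim DH^{i+1}_m(M)=i-r+1$, or $\dim DH^{i+1}_m(M)=i-r$ and $d_x(DH^{i+1}_m(M))=\dim DH^{i+1}_m(M)$. Only condition (3) comes from the $0:x$ term, via $\dim(0:_{DH^i_m(M)}x)=d_x(DH^i_m(M))\leq \dim DH^i_m(M)\leq i-r$.

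Taken at face value, your sentence would imply that only condition (3) matters, which is wrong. What you describe is the behaviour for the shift $r\mapsto r-1$, where the threshold at index $i$ becomes $i-r+1$.

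One small omission: $C(i)$ is only defined for $\depth(M)-1\leq i$. For smaller $i$ you need the (trivial) remark that $H^i_m(M/xM)=0$, since $\depth(M/xM)=\depth(M)-1$. This guarantees no failure occurs there.
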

The condition $C(i)$ in the theorem statement above is defined in Definition \ref{def:C(i)}.

The concept of Serre depth was developed in detail in \cite{FicarraSerreDepth} and appeared previously in \cite{MutaTeraiSerreDepth,PPTY,PPTY2}. It builds on information from the Matlis duals of local cohomology modules over a local Noetherian ring or a graded $k$-algebra. It stratifies Serre's $S_r$ condition. In particular, under mild conditions, the $r$th Serre depth reaches the maximum value if and only if the module is $S_r$. The idea of Serre depth originates from the characterization of Cohen--Macaulayness by the vanishing of local cohomology. A finitely generated module $M$ over a Noetherian local ring $R$ is Cohen--Macaulay if and only if $H^i_m(M)=0$ for all $0\leq i< \dim M$. We can ask whether the vanishing assumption on local cohomology can be weakened to obtain a characterization of $S_r$. One difficulty is that the local cohomology modules, if nonzero, are usually not finitely generated. Fortunately, they are Artinian, and we can take Matlis duals to obtain finitely generated modules. We then measure their Krull dimensions. The existence of local cohomology modules whose Matlis duals have large Krull dimension is an obstruction to the module being $S_r$.

The structure of this paper is as follows. Section \ref{notation} introduces the definition of Serre depth (Definition \ref{def:serre}) and recalls results from the literature used in later sections. In Section \ref{sec:quotient}, we investigate Matlis duals of local cohomology modules and give numerical relations between $\Sdepth^r(M)$ and $\Sdepth^{r+1}(M/xM)$, where $x$ is a nonzerodivisor on $M$. In Section \ref{sec:generalizedCM}, we characterize the $S_r$ property of generalized Cohen--Macaulay modules. In Section \ref{sec:LLC}, we prove that for rings having liftable local cohomology, Serre's $S_r$ property propagates to the reduced structure.

\section{Definitions and properties of Serre depth}\label{notation}
Throughout the present article, $(R,m,k)$ denotes a Noetherian local ring with maximal ideal $m$ and residue field $k$, $M$ is a finitely generated $R$-module, and $H^i_m(M)$ is the $i$th local cohomology of $M$ with respect to $m$. We write $E(k)$ for the injective hull of $k$, and abbreviate the Matlis dual functor $\Hom_R(\_,E(k))$ by $D(\_)$. The Matlis duals of local cohomology modules are the central objects of study for Serre depth.
\begin{definition}\label{def:serre}
    Let $r\geq 1$ be an integer. The \textbf{r-th Serre depth} of $M$ over a Noetherian local ring $(R,m,k)$, denoted by $\Sdepth^r_R(M)$, is defined to be \begin{equation}
        \Sdepth^r_R(M):=\min\{j:\dim DH^j_m(M)\geq j-r+1\}.
    \end{equation}
    The dimension here is the Krull dimension over $R$. We use the convention that the zero module has dimension $-\infty$. When the ring $R$ is understood, we write $\Sdepth^r(M)$ for $\Sdepth^r_R(M)$.
\end{definition}

$\Sdepth^r(M)$ is weakly decreasing and bounded by $\dim(M)$ and $\depth(M)$:
\begin{proposition}[{\protect\cite[Prop. 1.4]{FicarraSerreDepth}}]\label{prop:serre-bound}
    Let $M$ be a finitely generated module over a Noetherian local ring $(R,m)$. Then:
    \begin{enumerate}
        \item $\dim(M)\geq \Sdepth^1(M)\geq\cdots\geq \Sdepth^{r-1}(M)\geq \Sdepth^r(M)\geq\cdots$.
        \item $\Sdepth^r(M)=\depth(M)$ for every positive integer $r\geq \dim(M)$.
    \end{enumerate}
\end{proposition}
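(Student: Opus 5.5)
The plan is to argue directly from Definition \ref{def:serre}, using two standard facts about local cohomology of a finitely generated module $M$ with $d=\dim M$ and $t=\depth M$. First, $H^d_m(M)\neq 0$ (Grothendieck nonvanishing) and $H^j_m(M)=0$ for $j>d$. Second, $H^j_m(M)=0$ for $j<t$ and $H^t_m(M)\neq 0$. I will also use that for an Artinian module $H$, its Matlis dual $DH$ is finitely generated over $\widehat R$; its dimension is understood as in the definition, and it is $\geq 0$ exactly when $H\neq 0$. A further point is that $\dim DH^j_m(M)\le j$. This is the standard bound (e.g. via local duality after completing, $DH^j_m(M)\cong \Ext^{N-j}_S(\widehat M,S)$ over a Gorenstein $S$, whose dimension is at most $j$). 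I will quote it as known.

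For (1), note that the defining condition $\dim DH^j_m(M)\ge j-r+1$ becomes weaker as $r$ grows, since the right-hand side decreases. Hence the set of $j$ satisfying it for $r$ is contained in the set for $r+1$. Taking minima gives $\Sdepth^{r}(M)\ge \Sdepth^{r+1}(M)$. For the bound by $\dim M$, take $j=d$. Then $H^d_m(M)\ne 0$, so $\dim DH^d_m(M)\ge 0\ge d-r+1$ whenever $r\ge d+1$. That does not cover every $r\ge1$, so instead I will use that $\dim DH^d_m(M)=d$ (the top local cohomology's dual has full dimension; over the completion it is $\Ext^{N-d}_S(\widehat M,S)$, which has dimension $d$). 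Then $d\ge d-r+1$ holds for all $r\ge1$, so $d$ lies in the set and $\Sdepth^1(M)\le d$.

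For (2), let $r\ge d$. For $j<t$ the module is zero, with dimension $-\infty$, so these $j$ are excluded. For $j=t$ we have $\dim DH^t_m(M)\ge 0$, and $t-r+1\le d-r+1\le 1$. This is not yet enough when $t=d=r$, where we need $\dim\ge1$. So I will split into cases. If $t<d$, then $t-r+1\le 0$ and the condition holds at $j=t$. If $t=d$, then $\dim DH^d_m(M)=d\ge d-r+1$ since $r\ge1$. In both cases the minimum is exactly $t$, giving $\Sdepth^r(M)=\depth(M)$.

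The main obstacle is the input $\dim DH^d_m(M)=d$ together with $\dim DH^j_m(M)\le j$. I would justify both by passing to $\widehat R$, which changes neither the local cohomology nor the dimensions. One then writes $\widehat R$ as a quotient of a regular (hence Gorenstein) local ring by Cohen's structure theorem and applies local duality. The top Ext $\Ext^{N-d}_S(\widehat M,S)$ has support equal to the union of the $d$-dimensional components of $\operatorname{Supp}\widehat M$, so its dimension is $d$.
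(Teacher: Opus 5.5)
Your proof is correct and follows essentially the paper's route. Monotonicity in $r$ comes straight from Definition~\ref{def:serre}. The bound by $\dim M$, together with non-emptiness of the defining set (the gap flagged in Remark~\ref{rem:top}), comes from $\dim DH^d_m(M)=d$; the paper gets this from attached primes, while you get it from local duality over $\widehat R$. The paper only cites part (2), so your direct case split $t<d$ versus $t=d$ supplies what it omits.

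One caution: the bound $\dim DH^j_m(M)\le j$ you list as an input is never used, which is just as well. With dimension taken over $R$ as in the definition it can fail, for example for a local domain with non-equidimensional completion. So ``completion does not change the dimensions'' should only be claimed for $j=d$.
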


\begin{remark}\label{rem:top}
    The original proof of (1) in \cite[Prop 1.4]{FicarraSerreDepth} did not rule out the case that for all $j$, $\dim DH^j_m(M)< j-r+1$. However, the second proof in the remark immediately following it works using the theory of attached primes: the Matlis dual of the top local cohomology module has dimension of its support equal to the dimension of the module. That is, if $\dim(M)=n$, then $\dim DH^n_m(M)=n\geq n-r+1$ for all $r\geq 1$. Hence, when we take the minimum as in the definition \ref{def:serre}, it is always smaller or equal to $\dim(M)=n$.
\end{remark}
Analogously to the fact that $\depth(M)=\dim(M)$ if and only if $M$ is Cohen--Macaulay, when $\Sdepth^r(M)=\dim(M)$, we recover Serre's $S_r$ property.
\begin{definition}
    Let $(R,m)$ be a local ring and let $M$ be an $R$-module. We say that $M$ is \textbf{equidimensional} if all minimal primes of $\Ann(M)$ have the same dimension.
\end{definition}
\begin{theorem}[{\protect\cite[Theorem B]{FicarraSerreDepth}}]\label{thm:SerreDepth}
    If $\Sdepth^r(M)=\dim(M)$, then $M$ is $S_r$.
    The converse holds if we assume, in addition, that $R$ is a homomorphic image of a Gorenstein ring and $M$ is equidimensional.
\end{theorem}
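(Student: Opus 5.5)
The plan is to translate everything through local duality into statements about finitely generated Ext modules, and then compare supports prime by prime. Recall that $M$ is $S_r$ if $\depth M_p\geq\min(r,\dim M_p)$ for all $p\in\operatorname{Supp}M$. Fix a Gorenstein local ring $(S,\mathfrak n)$ of dimension $N$ mapping onto $R$, and set $K^j:=\Ext^{N-j}_S(M,S)$, a finitely generated module supported in $\operatorname{Supp}M$. Local duality gives $DH^j_m(M)\cong K^j\otimes_S\widehat S$, so $\dim DH^j_m(M)=\dim K^j$. For a prime $p$ with $t=\dim R/p$, localizing the duality at $p$ (using that $S_p$ is Gorenstein and that $\dim S_p=N-\dim S/p$, since $S$ is catenary and equidimensional) gives the shifted version: $(K^{i+t})_p\neq 0$ if and only if $H^i_{pR_p}(M_p)\neq 0$.

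For the forward implication no hypothesis on $R$ is assumed, so I first reduce to $\widehat R$.
\begin{itemize}
\item The modules $H^j_m(M)$ are unchanged by completion, so $\Sdepth^r_{\widehat R}(\widehat M)=\Sdepth^r_R(M)$. Also $\dim\widehat M=\dim M$.
\item Descent of $S_r$: given $p\in\operatorname{Supp}M$, choose $Q$ minimal over $p\widehat R$. The closed fibre of $R_p\to\widehat R_Q$ is then zero-dimensional, so $\depth\widehat M_Q=\depth M_p$ and $\dim\widehat M_Q=\dim M_p$. Hence $S_r$ for $\widehat M$ implies $S_r$ for $M$.
\end{itemize}
So assume $R$ is complete, hence a Gorenstein image, and suppose $\dim K^j\leq j-r$ for all $j<n=\dim M$. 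Take $p\in\operatorname{Supp}M$ with $t=\dim R/p$ and $H^i_{pR_p}(M_p)\neq0$, so that $p\in\operatorname{Supp}K^{i+t}$.
\begin{itemize}
\item If $i+t<n$, then $t\leq\dim K^{i+t}\leq i+t-r$, so $i\geq r$.
\item If $i+t\geq n$, then $i\geq n-t\geq\dim M_p$, using $\dim M_p+\dim R/p\leq\dim M$.
\end{itemize}
Either way every nonvanishing index satisfies $i\geq\min(r,\dim M_p)$. Hence $\depth M_p\geq\min(r,\dim M_p)$, which is $S_r$.

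For the converse, $R$ is already a Gorenstein image, so the duality above is available without completing; this avoids having to show that equidimensionality and $S_r$ pass to $\widehat M$. Because $R$ is catenary and $M$ is equidimensional, $\dim M_p+\dim R/p=n$ for every $p\in\operatorname{Supp}M$. I expect this to be the delicate point: it needs the standard fact that a Gorenstein image is universally catenary with equidimensional local rings.

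Suppose, for contradiction, that $\dim K^j\geq j-r+1$ for some $j<n$.
\begin{itemize}
\item Choose $p\in\operatorname{Supp}K^j$ with $t=\dim R/p=\dim K^j$. If $j-r+1\leq 0$, simply take $p=m$; this uses only $K^j\neq0$, which holds since $\dim K^j\geq j-r+1>-\infty$.
\item Then $t\geq j-r+1$, and $t\leq j$ as $H^{j-t}_{pR_p}(M_p)\neq 0$ by the shifted duality.
\item So $\depth M_p\leq j-t\leq r-1$, while $\dim M_p=n-t>j-t$.
\end{itemize}
This gives $\depth M_p<\min(r,\dim M_p)$, contradicting $S_r$. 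Therefore $\dim DH^j_m(M)\leq j-r$ for all $j<n$, and with Remark~\ref{rem:top} this gives $\Sdepth^r(M)=n$.

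The main obstacle is the bookkeeping in the localized local duality: getting the index shift exactly $t=\dim R/p$ and verifying the dimension formula $\dim S_p=N-\dim S/p$.
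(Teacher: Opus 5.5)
\paragraph{Comparison with the paper.} The paper gives no proof of this statement. It quotes it from \cite[Theorem B]{FicarraSerreDepth} and only remarks in the introduction that the result can be obtained with the techniques of \cite{CM}. Your proposal is a correct, self-contained proof using exactly such techniques. Local duality turns $\Sdepth^r(M)=n$ into $\dim K^j\le j-r$ for $j<n$. The localized duality $(K^{i+t})_p\neq 0\iff H^i_{pR_p}(M_p)\neq 0$ with $t=\dim R/p$ is correct, because $S$ is Cohen--Macaulay, so its localization at the preimage of $p$ has dimension $N-t$. This converts the Ext condition into depth information at each prime.

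Compared with the citation, your write-up shows where each hypothesis enters. The forward direction uses only $\dim M_p+\dim R/p\le\dim M$, which holds for every module. So the only work there is the reduction to $\widehat R$, and your flat-descent argument with $Q$ minimal over $p\widehat R$ handles it correctly. The converse needs the equality $\dim M_p=n-t$ to get $\dim M_p>j-t$. In Example~\ref{ex:equidim} this is exactly what breaks. For $j=r=2$ one takes $p=(x_1,x_2,x_3)$ with $t=2$, and $M_p\cong\kappa(p)$ has dimension $0$ rather than $n-t=1$, so no violation of $S_2$ arises.

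\paragraph{Two small corrections.} First, the equality $\dim M_p+\dim R/p=n$ needs only two things: that $R$ is catenary (true for any quotient of a Cohen--Macaulay ring), and the assumed equidimensionality of $M$. You apply these in each catenary local domain $R/q$ with $q$ minimal in $\operatorname{Supp}M$. As literally stated, the second half of the ``standard fact'' you invoke is false: homomorphic images of Gorenstein rings need not have equidimensional local rings, e.g.\ $k[[x,y,z]]/(xy,xz)$.

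Second, $DH^j_m(M)$ is finitely generated over $\widehat R$ but not necessarily over $R$. So your equality $\Sdepth^r_{\widehat R}(\widehat M)=\Sdepth^r_R(M)$ is immediate only if dimensions are measured over $\widehat R$. If ``dimension over $R$'' is read via $\Ann_R$, one only has $\dim_R\ge\dim_{\widehat R}$. That is, however, exactly the direction your reduction needs.
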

\begin{remark}
    As mentioned in \cite{FicarraSerreDepth}, if $r\geq 2$ and $M$ is indecomposable or a quotient of $R$, then $M$ is equidimensional. Hence the converse of Theorem \ref{thm:SerreDepth} applies in these cases.
\end{remark}

\begin{example}\label{ex:equidim}
    The equidimensional assumption in the converse is essential. Let $M=M_1\oplus M_2$. Then $M$ is $S_r$ if and only if $M_1$ and $M_2$ are $S_r$. However, the definition of Serre depth tests the dimensions of the Matlis duals of local cohomology against the dimension of $M$, rather than against the dimensions of $M_1$ and $M_2$ separately.

    Here is a concrete example from \cite{FicarraSerreDepth}: let
    \[
        R=k[[x_1,x_2,x_3,x_4,x_5]],\quad
        M_1=k[[x_1,x_2,x_3]],\quad
        M_2=k[[x_4,x_5]],
    \]
    and set $M=M_1\oplus M_2$. Then $\Ann(M)=(x_4,x_5)\cap(x_1,x_2,x_3)$, and $M$ is not equidimensional because the minimal primes over $\Ann(M)$ are $(x_4,x_5)$ and $(x_1,x_2,x_3)$, which have different dimensions. The module $M$ is $S_2$ since $M_1$ and $M_2$ are $S_2$. On the other hand, we calculate the Serre depth. By \cite[3.5.7, 3.5.9]{CM}, $\dim DH^2_m(M_1)=0$, and by \cite[3.3.3, 3.5.9]{CM}, $\dim DH^2_m(M_2)=2$. Hence $\dim DH^2_m(M)=2\geq 2-2+1=1$. Again by Grothendieck's vanishing theorem, $DH^i_m(M)=0$ for $i=0,1$, and hence $\dim DH^i_m(M)=-\infty$ by our convention. Thus $\Sdepth^2(M)=2<3=\dim M$.

    Therefore $M$ satisfies $S_2$, but $\Sdepth^2(M)\neq\dim M$.
\end{example}

\section{Serre depth with respect to quotients by regular elements}\label{sec:quotient}
Given $x\in m$ which is a nonzerodivisor on $M$, we can form the short exact sequence $0\rightarrow M\xrightarrow{x} M\rightarrow M/xM\rightarrow 0$. Then we get a long exact sequence of local cohomology
\begin{align}
    0\rightarrow H^0_m(M)\rightarrow H^0_m(M)\rightarrow H^0_m(M/xM)\rightarrow H^1_m(M)\rightarrow \cdots\\
    \rightarrow H^i_m(M)\rightarrow H^i_m(M)\rightarrow H^i_m(M/xM)\rightarrow\cdots
\end{align}
Since taking Matlis duals is a contravariant exact functor on Artinian modules, applying it to the long exact sequence above gives another exact sequence:
\begin{equation}\label{les}
    \cdots \rightarrow DH^{i+1}_m(M)\xrightarrow{\phi} DH^{i+1}_m(M)\rightarrow DH^{i}_m(M/xM)\rightarrow DH^{i}_m(M)\xrightarrow{\theta} DH^{i}_m(M)\cdots
\end{equation}
In the definition of Serre depth, only the Krull dimension matters. Thus it is natural to deduce the dimension of $DH^i_m(M/xM)$ from the dimensions of $DH^{i+1}_m(M)$ and $DH^{i}_m(M)$ by using the short exact sequence
\begin{equation}
    0\rightarrow \coker(\phi)\rightarrow DH^i_m(M/xM)\rightarrow \ker(\theta)\rightarrow 0
\end{equation}
Another observation is that both $\phi$ and $\theta$ are just multiplication by $x$. Hence, we make the following definition:
\begin{definition}\label{def:d_x}
    Given a finitely generated module $N$ over a Noetherian ring $R$, and $x\in R$ a non-unit, we define $d_x(N)\in \mathbb Z\cup\{-\infty\}$ by
\begin{equation}
    d_x(N):= \max\{\dim(R/p):p\in \Ass(N)\cap V(x)\}.
\end{equation}
\end{definition}

\begin{remark}
    Notice that it is possible that $\Ass(N)\cap V(x)=\emptyset$, in which case we define $d_x(N)=-\infty$.
    This is consistent with the convention that the dimension of the zero module is $-\infty$.
\end{remark}

\begin{lemma}\label{lem:dim}
    Under the same assumptions as in Definition \ref{def:d_x}, consider the multiplication by $x$ map $\phi_x:N\rightarrow N$.
    \begin{enumerate}
        \item $\dim \coker(\phi_x)=\dim N/xN=\dim N-1$ if and only if $d_x(N)<\dim N$; equivalently, $\dim \coker(\phi_x)=\dim N$ if and only if $d_x(N)=\dim N$.
        \item $\dim(\ker(\phi_x))=d_x(N)$.
    \end{enumerate}
\end{lemma}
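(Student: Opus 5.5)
Both parts reduce to standard facts about supports and associated primes of a finitely generated module $N$ over a Noetherian ring. We handle the kernel first, since it is the cleaner statement, and then the cokernel.

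For (2), put $K=\ker(\phi_x)=(0:_N x)$. We claim that $\Ass(K)=\Ass(N)\cap V(x)$.
\begin{itemize}
\item Since $K\subseteq N$, we have $\Ass(K)\subseteq\Ass(N)$. Also $xK=0$, so every $p\in\Ass(K)$ contains $\Ann(K)\ni x$.
\item Conversely, let $p=\Ann(n)\in\Ass(N)$ with $x\in p$. Then $xn=0$, so $n\in K$, and hence $p\in\Ass(K)$.
\end{itemize}
The dimension of a finitely generated module is the maximum of $\dim R/p$ over its minimal primes. Minimal primes of the support are associated, and every associated prime lies in the support. Hence $\dim K=\max\{\dim R/p: p\in\Ass(K)\}$. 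By the claim this equals $d_x(N)$. This is consistent with the convention $\dim 0=-\infty$ when $\Ass(N)\cap V(x)=\emptyset$.

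For (1), note $\coker(\phi_x)=N/xN$ and $\operatorname{Supp}(N/xN)=\operatorname{Supp}(N)\cap V(x)$. Since $x$ is a non-unit, $\dim N/xN\in\{\dim N-1,\dim N\}$. The upper bound is clear. The lower bound is the standard $\dim N/xN\geq \dim N-1$, which holds because $R$ need not be local but we may localize at a maximal ideal containing $x$ realizing the dimension. This step is where care is needed, and we return to it below.

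Now $\dim N/xN=\dim N$ if and only if some prime $p\in\operatorname{Supp}(N)\cap V(x)$ has $\dim R/p=\dim N$. Such a $p$ is necessarily minimal in $\operatorname{Supp}(N)$, hence lies in $\Ass(N)\cap V(x)$. Conversely, any $p\in\Ass(N)\cap V(x)$ with $\dim R/p=\dim N$ gives such a prime. Thus $\dim N/xN=\dim N$ if and only if $d_x(N)\geq\dim N$. Since always $d_x(N)\leq\dim N$, this is equivalent to $d_x(N)=\dim N$. The complementary case then gives $\dim N/xN=\dim N-1$ exactly when $d_x(N)<\dim N$.

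The main obstacle is justifying $\dim N/xN\geq\dim N-1$ in the possibly non-local setting of Definition~\ref{def:d_x}. In the paper the lemma is applied to $N=DH^i_m(M)$, which are finitely generated over the complete local ring $\hat R$, or over $R$ in the relevant setting. So we intend to argue in the local case, where Krull's principal ideal theorem gives the inequality directly. If the non-local case is needed, we would instead read (1) as two parallel statements, each characterized by the $d_x$ criterion. The criterion $\dim N/xN=\dim N\iff d_x(N)=\dim N$ is proved above without locality, and only the value $\dim N-1$ uses the local hypothesis.
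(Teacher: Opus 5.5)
Your proposal is correct and is essentially the paper's proof. Part (2) goes through the same identification $\Ass(\ker\phi_x)=\Ass(N)\cap V(x)$. Part (1) uses the same observation: a prime $p\in\operatorname{Supp}(N/xN)$ with $\dim R/p=\dim N$ must be a minimal, hence associated, prime of $N$ containing $x$, and Krull's principal ideal theorem then supplies the value $\dim N-1$.

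Your worry about locality is justified, and the paper's proof silently assumes locality too. For non-local $R$ the value $\dim N-1$ genuinely fails: take a field $k$, $R=N=k[s,t,u]/(st,su)$ and $x=s-1$. Then $d_x(N)=-\infty<2=\dim N$, but $\dim N/xN=0$. So drop your general claim $\dim N/xN\in\{\dim N-1,\dim N\}$ and the proposed repair by localizing at a maximal ideal realizing the dimension, since no such maximal ideal need exist. Restricting to the local case, which is all the paper uses, is the right fix.
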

\begin{proof}
    (1) Assume first that $d_x(N)=\dim N=n$. Choose $p\in \Ass(N)\cap V(x)$ such that $\dim R/p=n$. Then $p$ is a minimal prime over $\Ann(N)$. Since $x\in p$, the prime $p$ remains in $\Supp(N/xN)$. Hence $\dim(N/xN)\geq n$, and the reverse inequality is automatic. Thus $\dim(N/xN)=n$.

    Conversely, assume that $d_x(N)<\dim(N)=n$. If $\dim(N/xN)=n$, then there is a prime $p\in \Supp(N/xN)$ with $\dim R/p=n$. Choose a minimal prime $q$ of $N$ with $q\subseteq p$. Since $\dim R/q\leq n$ and $\dim R/p=n$, we must have $\dim R/q=n$ and hence $q=p$. Thus $p$ is a top-dimensional associated prime of $N$. Since $x\in p$, this contradicts $d_x(N)<n$. Therefore $\dim(N/xN)<n$, and the principal ideal theorem gives $\dim(N/xN)=n-1$.

    (2) We have
    \[
        \Ass(\ker(\phi_x))=\Ass(N)\cap V(x).
    \]
    Indeed, if $p\in \Ass(\ker(\phi_x))$, then $p=\Ann(t)$ for some $t\in \ker(\phi_x)$. Hence $xt=0$ and $x\in \Ann(t)=p$. Also, since $\ker(\phi_x)\subseteq N$, we have $p\in\Ass(N)$. Conversely, if $p\in\Ass(N)\cap V(x)$, then $p=\Ann(t)$ for some $t\in N$. Since $x\in p$, we have $xt=0$, so $t\in\ker(\phi_x)$ and $p\in\Ass(\ker(\phi_x))$. Taking dimensions gives $\dim(\ker(\phi_x))=d_x(N)$.
\end{proof}

\begin{theorem}\label{thm:dim}
    Set $A_j=DH^j_m(M)$. Then
    \[
    \begin{aligned}
    \dim DH^i_m(M/xM)
    ={}& \begin{cases}
    \max\{\dim A_{i+1},d_x(A_i)\},
    &\text{if }d_x(A_{i+1})=\dim A_{i+1}, \\
    \max\{\dim A_{i+1}-1,d_x(A_i)\},
    &\text{if }d_x(A_{i+1})<\dim A_{i+1}.
    \end{cases}
    \end{aligned}
    \]
\end{theorem}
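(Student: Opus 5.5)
The plan is to read off $\dim DH^i_m(M/xM)$ from the short exact sequence
\[
0\rightarrow \coker(\phi)\rightarrow DH^i_m(M/xM)\rightarrow \ker(\theta)\rightarrow 0
\]
extracted from the exact sequence \eqref{les}. Here $\phi$ is multiplication by $x$ on $A_{i+1}=DH^{i+1}_m(M)$ and $\theta$ is multiplication by $x$ on $A_i=DH^i_m(M)$. Both $A_i$ and $A_{i+1}$ are finitely generated $R$-modules, being Matlis duals of Artinian modules over a local ring. We should also note that $D$ commutes with multiplication by $x$, so the dual maps are indeed $\phi_x$. For any short exact sequence of finitely generated modules $0\to N'\to N\to N''\to 0$ we have $\Supp N=\Supp N'\cup\Supp N''$, hence $\dim N=\max\{\dim N',\dim N''\}$. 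This holds with the convention that the zero module has dimension $-\infty$. Thus
\[
\dim DH^i_m(M/xM)=\max\{\dim\coker(\phi_x|_{A_{i+1}}),\ \dim\ker(\phi_x|_{A_i})\}.
\]

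Next apply Lemma \ref{lem:dim} to each term, with $N=A_i$ and $N=A_{i+1}$. Part (2) gives $\dim\ker(\phi_x|_{A_i})=d_x(A_i)$. Part (1) gives $\dim\coker(\phi_x|_{A_{i+1}})=\dim A_{i+1}$ when $d_x(A_{i+1})=\dim A_{i+1}$, and $\dim A_{i+1}-1$ when $d_x(A_{i+1})<\dim A_{i+1}$. Substituting these into the maximum yields the two cases of the formula.

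The main point requiring care is the degenerate situation $A_{i+1}=0$, where Lemma \ref{lem:dim}(1) must be read with conventions. Then $\Ass(A_{i+1})=\emptyset$, so $d_x(A_{i+1})=-\infty=\dim A_{i+1}$, and we are in the first case. This correctly gives $\coker=0$ of dimension $-\infty$, so the formula reduces to $d_x(A_i)$. I will also check two more boundary cases:
\begin{itemize}
\item If $\dim A_{i+1}=0$ and $A_{i+1}\neq 0$, then $\Ass(A_{i+1})=\{m\}\subseteq V(x)$ because $x\in m$. So $d_x(A_{i+1})=0=\dim A_{i+1}$, and the "$-1$" case never produces a spurious value $-1$ for a nonzero cokernel.
\item The principal ideal theorem step in Lemma \ref{lem:dim}(1) is legitimate because $x\in m$ is a non-unit.
\end{itemize}
With these conventions verified, the theorem follows directly.
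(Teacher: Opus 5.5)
Your proposal is correct and follows the paper's proof: you split $DH^i_m(M/xM)$ by $0\to\coker(\phi)\to DH^i_m(M/xM)\to\ker(\theta)\to 0$, take the maximum of the dimensions of the outer terms, and evaluate them with Lemma \ref{lem:dim}(1) and (2). Your boundary checks for $A_{i+1}=0$ and $\dim A_{i+1}=0$ are correct and make explicit what the paper leaves implicit; one caveat, shared with the paper, is that Matlis duals of Artinian modules are in general finitely generated over $\hat R$ rather than $R$ (e.g.\ $D(E(k))=\hat R$), so invoking Lemma \ref{lem:dim} tacitly assumes $R$ is complete or that one works over $\hat R$.
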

   \begin{proof}
       We use the short exact sequence
    \begin{equation}\label{ses}
    0\rightarrow \coker(\phi)\rightarrow DH^i_m(M/xM)\rightarrow \ker(\theta)\rightarrow 0
    \end{equation}
    In a short exact sequence, the dimension of the middle term is the maximum of the dimensions of the two outer terms.
    We split into two cases. If
    \[
        d_x(DH^{i+1}_m(M))=\dim DH^{i+1}_m(M),
    \]
    then Lemma \ref{lem:dim}(1) gives $\dim \coker(\phi)=\dim DH^{i+1}_m(M)$. If
    \[
        d_x(DH^{i+1}_m(M))<\dim DH^{i+1}_m(M),
    \]
    then Lemma \ref{lem:dim}(1) gives $\dim \coker(\phi)=\dim DH^{i+1}_m(M)-1$. In both cases, Lemma \ref{lem:dim}(2) gives $\dim \ker(\theta)=d_x(DH^i_m(M))$. The theorem follows by combining these computations.
   \end{proof}
\begin{remark}
    An interesting intuition we can get from Theorem \ref{thm:dim} is that a larger $d_x(DH^i_m(M))$ would imply a larger local cohomology module of $M/xM$.
\end{remark}

Theorem \ref{thm:dim} is very powerful for the study of Serre depth because if we know $\dim DH^i_m(M/xM)$ for all $i$, then we can easily determine $\Sdepth^r(M)$ for all $r$.
If we specialize to particular indices $r_1,r_2$, we can even relate $\Sdepth^{r_1}(M)$ with $\Sdepth^{r_2}(M/xM)$ with conditions on the $d_x(DH^i_m(M))$ and $r_1,r_2$.
The most interesting case in terms of the use of $d_x(DH^i_m(M))$ is presented in the rest of the section.

We now try to understand $\Sdepth^{r+1}(M/xM)$ from the local cohomology of $M$. It is not hard to see using Nakayama's lemma that $\Sdepth^{r+1}(M/xM)\leq \Sdepth^r(M)-1=s-1$ (see proof of Theorem \ref{thm:bertini}).
However, by the definition of Serre depth, the inequality can be strict if for some index $i< s-1$, $\dim DH^{i}_m(M/xM)\geq i-(r+1)+1=i-r$.
Hence we want to take a closer look at the long exact sequence of the dual of the local cohomology (see below), and to address the case of ``the largest possible $\dim DH^{i}_m(M/xM)$".
One natural idea is to force, for two adjacent maps, one mapping to $DH^{i+1}_m(M)$ and the other one from $DH^{i}_m(M)$, to have the largest possible cokernel and kernel.
They have already been studied in Lemma \ref{lem:dim} and Theorem \ref{thm:dim}. The only thing left to do is to patch together the data.

\begin{lemma}\label{lem:iff}
     Let $(R,m,k)$ be a Noetherian local ring, $M$ a finitely generated $R$-module of dimension $n$, and $x\in R$ a nonzerodivisor on $M$. Let $s=\Sdepth^r(M)$. For each $i$ satisfying $\depth(M)-1\leq i< s-1$, we have $\dim DH^{i}_m(M/xM)\geq i-r$ if and only if one of the following conditions holds:
    \begin{enumerate}
        \item $\dim DH^{i+1}_m(M)=i-r+1$;
        \item $\dim DH^{i+1}_m(M)=i-r$ and $d_x(DH^{i+1}_m(M))=\dim DH^{i+1}_m(M)$;
        \item $\dim DH^{i}_m(M)=i-r$ and $d_x(DH^{i}_m(M))=\dim DH^{i}_m(M)$.
    \end{enumerate}

\end{lemma}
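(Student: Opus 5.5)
The plan is to read the statement off directly from Theorem \ref{thm:dim}, using the definition of $s=\Sdepth^r(M)$ to bound the dimensions of $A_i=DH^i_m(M)$ and $A_{i+1}=DH^{i+1}_m(M)$ from above. We then check which combinations of values make the formula in Theorem \ref{thm:dim} reach $i-r$.

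\textbf{Step 1: upper bounds.} Since $i<s-1$, both $i$ and $i+1$ are less than $s$. By the minimality in Definition \ref{def:serre}, this gives $\dim A_{i+1}<(i+1)-r+1$ and $\dim A_i<i-r+1$. Hence
\[
\dim A_{i+1}\le i-r+1 \quad\text{and}\quad \dim A_i\le i-r.
\]
We also record that $d_x(N)\le \dim N$ for every finitely generated $N$. Indeed, every $p\in\Ass(N)$ lies in $\Supp N$, so $\dim R/p\le\dim N$.

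\textbf{Step 2: the kernel term.} By Theorem \ref{thm:dim}, $\dim DH^i_m(M/xM)$ is the maximum of a ``cokernel term'' $c$ and the kernel term $d_x(A_i)$. We have $d_x(A_i)\le \dim A_i\le i-r$. So $d_x(A_i)\ge i-r$ holds exactly when $\dim A_i=i-r$ and $d_x(A_i)=\dim A_i$, which is condition (3).

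\textbf{Step 3: the cokernel term.} There are two cases, according to Theorem \ref{thm:dim}.
\begin{itemize}
\item If $d_x(A_{i+1})=\dim A_{i+1}$, then $c=\dim A_{i+1}$. Given $\dim A_{i+1}\le i-r+1$, the inequality $c\ge i-r$ means $\dim A_{i+1}\in\{i-r,\,i-r+1\}$.
\item If $d_x(A_{i+1})<\dim A_{i+1}$, then $c=\dim A_{i+1}-1$, and $c\ge i-r$ forces $\dim A_{i+1}=i-r+1$.
\end{itemize}
Taking the union of the two cases, $c\ge i-r$ holds iff either $\dim A_{i+1}=i-r+1$, which is condition (1), or $\dim A_{i+1}=i-r$ with $d_x(A_{i+1})=\dim A_{i+1}$, which is condition (2). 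Since a maximum is $\ge i-r$ iff one of its entries is, Steps 2 and 3 together give the equivalence.

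\textbf{Main obstacle: degenerate values.} The step needing care is the bookkeeping in $\mathbb Z\cup\{-\infty\}$, in particular when $i-r$ is negative. Here the hypothesis $i\ge\depth(M)-1$ enters. Since $\depth(M)\le i+1<s$, the definition of $s$ at $j=\depth(M)$ gives $0\le \dim A_{\depth M}<\depth(M)-r+1$. Hence $\depth(M)\ge r$, and so $i-r\ge -1$.

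When $i-r=-1$, the condition $\dim DH^i_m(M/xM)\ge -1$ just means nonvanishing. For condition (1), note that when $\dim A_{i+1}=0$ the module $A_{i+1}$ has finite length, so $\Ass(A_{i+1})=\{m\}\ni x$. This forces $d_x(A_{i+1})=\dim A_{i+1}=0$, and Lemma \ref{lem:dim} then gives a nonzero cokernel, consistent with the formula. We also use the convention $-\infty-1=-\infty$ for zero modules. With these conventions checked, the case analysis above goes through verbatim.
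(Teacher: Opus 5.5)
Your proposal is correct and is essentially the paper's proof. You use minimality in the definition of $s=\Sdepth^r(M)$ to get $\dim DH^{i+1}_m(M)\le i-r+1$ and $\dim DH^{i}_m(M)\le i-r$, then enumerate the cases of Theorem \ref{thm:dim} in which the maximum reaches $i-r$; your remark that $d_x(N)\le\dim N$ (making those two cases exhaustive) and your check of the degenerate case $i-r=-1$ simply spell out what the paper's one-line ``enumerating all cases'' leaves implicit.
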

\begin{proof}
    We apply Theorem \ref{thm:dim} repeatedly. If (1) holds, then even the smaller case in Theorem \ref{thm:dim} guarantees that $\dim DH^{i}_m(M/xM)\geq i-r$. If (2) holds, then $d_x(DH^{i+1}_m(M))=\dim DH^{i+1}_m(M)$, so the first case of Theorem \ref{thm:dim} gives $\dim DH^{i}_m(M/xM)\geq \dim(DH^{i+1}_m(M))=i-r$. If (3) holds, then Theorem \ref{thm:dim} gives $\dim DH^{i}_m(M/xM)\geq d_x(DH^{i}_m(M))=i-r$.

    Conversely, since $s=\Sdepth^r(M)$ and $i<s-1$, the definition of Serre depth gives $\dim DH^{i+1}_m(M)\leq i+1-r$ and $\dim DH^{i}_m(M)\leq i-r$. Enumerating all cases in Theorem \ref{thm:dim} in which $\dim DH^{i}_m(M/xM)\geq i-r$, we are left with precisely conditions (1), (2), and (3).
\end{proof}
\begin{definition}\label{def:C(i)}
    Under the same assumptions on $(R,m)$, $M$, and $x\in R$ as in Lemma \ref{lem:iff}, we say that \textbf{condition $C(i)$ holds} if one of the conditions (1), (2), or (3) in Lemma \ref{lem:iff} holds.
\end{definition}
\begin{remark}
    Notice that $C(i)$ is defined only for $i$ such that $\depth(M)-1\leq i< s-1$. By Lemma \ref{lem:iff}, $C(i)$ holds if and only if $\dim DH^{i}_m(M/xM)\geq i-r$.
\end{remark}

\begin{theorem}\label{thm:bertini}
    With the same notation as in Lemma \ref{lem:iff}, we have:
    \begin{enumerate}
        \item If $C(i)$ does not hold for any $i$ where $\depth(M)-1\leq i< s-1$,
        then $\Sdepth^{r+1}(M/xM)=s-1$.
        \item Otherwise $\Sdepth^{r+1}(M/xM)$ is equal to the smallest index $i$ such that $C(i)$ holds.
    \end{enumerate}
\end{theorem}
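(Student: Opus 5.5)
The plan is to compute $\Sdepth^{r+1}(M/xM)=\min\{j:\dim DH^j_m(M/xM)\geq j-r\}$ directly. I would split the possible indices $j$ into three ranges: $j<\depth(M)-1$, then $\depth(M)-1\leq j<s-1$, then $j=s-1$. For each range I decide whether the defining inequality can hold. Combining the three ranges gives both statements (1) and (2) at once.

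\emph{Step 1 (the index $s-1$ always qualifies).} Since $s=\Sdepth^r(M)$, the definition gives $\dim DH^s_m(M)\geq s-r+1$. In particular $A_s=DH^s_m(M)\neq 0$, because the zero module has dimension $-\infty$. By Theorem \ref{thm:dim} applied with $i=s-1$, in either case we have
\[
\dim DH^{s-1}_m(M/xM)\geq \dim A_s-1\geq s-r=(s-1)-(r+1)+1 .
\]
This is the Nakayama observation: multiplication by $x$ on the nonzero finitely generated module $A_s$ has cokernel of dimension at least $\dim A_s-1$, by Lemma \ref{lem:dim}(1). Hence $\Sdepth^{r+1}(M/xM)\leq s-1$. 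I also record that $s\geq\depth(M)$, since $A_j=0$ for $j<\depth(M)$. So the range $\depth(M)-1\leq i<s-1$ appearing in Lemma \ref{lem:iff} makes sense.

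\emph{Step 2 (small indices never qualify).} Because $x$ is $M$-regular and lies in $m$, we have $\depth(M/xM)=\depth(M)-1$. Grothendieck's vanishing theorem then gives $H^j_m(M/xM)=0$ for $j<\depth(M)-1$. So $\dim DH^j_m(M/xM)=-\infty$ there, and the inequality $\dim DH^j_m(M/xM)\geq j-r$ fails for every such $j$.

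\emph{Step 3 (the middle range).} For $\depth(M)-1\leq i<s-1$, Lemma \ref{lem:iff} (equivalently, the remark after Definition \ref{def:C(i)}) says that the inequality $\dim DH^i_m(M/xM)\geq i-r$ holds exactly when $C(i)$ holds. Putting the steps together, the minimum defining $\Sdepth^{r+1}(M/xM)$ is attained at the smallest $i$ in $[\depth(M)-1,s-1)$ for which $C(i)$ holds, if such an $i$ exists. Otherwise, by Steps 2 and 3 no index below $s-1$ works, and by Step 1 the minimum is $s-1$. These are statements (2) and (1) respectively.

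The only delicate point is bookkeeping at the boundary. One must check that $A_s$ is genuinely nonzero, so that the $-\infty$ convention does not break Step 1. One must also confirm that the range of Lemma \ref{lem:iff} together with the vanishing range from Step 2 covers every index below $s-1$. Both checks follow from the definition of $s$ and from $\depth(M/xM)=\depth(M)-1$, so I expect no real obstacle beyond these checks.
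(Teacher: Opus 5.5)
Your proposal is correct and follows the paper's proof: Lemma \ref{lem:iff} handles the middle range, and Theorem \ref{thm:dim} at $i=s-1$, together with $\dim DH^s_m(M)\geq s-r+1$, shows that the index $s-1$ always qualifies. Your Step 2, the vanishing of $H^j_m(M/xM)$ for $j<\depth(M)-1$, spells out what the paper leaves under ``it is clear.'' One harmless slip: $(s-1)-(r+1)+1=s-r-1$, not $s-r$, so your bound $\geq s-r$ is one more than the bound $\geq s-r-1$ that is actually needed.
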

\begin{proof}
    (1) By the equivalence proved in Lemma \ref{lem:iff} and the definition of Serre depth, it is clear that $\Sdepth^{r+1}(M/xM)\geq s-1$. To check that equality holds, we need to show $\dim DH^{s-1}_m(M/xM)\geq (s-1)-(r+1)+1=s-r-1$.
    Since $\Sdepth^r(M)=s$, we have $\dim DH^s_m(M)\geq s-r+1$. Applying Theorem \ref{thm:dim} with $i=s-1$, we have $\dim DH^{s-1}_m(M/xM)\geq \dim DH^s_m(M)-1\geq s-r$, as needed.
    For (2), again it follows from the definition of Serre depth and the equivalence in Lemma \ref{lem:iff}.
\end{proof}

\begin{remark}
    A related result was proved in \cite[Corollary 1.8]{FicarraSerreDepth}, namely
    \[
        \Sdepth^{r+1}(M)\leq \Sdepth^r(M/xM)\leq \Sdepth^r(M).
    \]
    It was then used by induction to generalize to quotients by regular sequences. That result focuses on bounds for Serre depth that correlate with the length of the regular sequence, while Theorem \ref{thm:bertini} focuses on finding the exact value of the Serre depth.
\end{remark}

\begin{remark}
    Note that $d_x(DH^i_m(M))$ is usually not easy to calculate. To find it, we need to understand the associated primes of $DH^i_m(M)$. However, there are cases that we can compute. Here is a class of examples from \cite[Theorem 3.4]{M}.
\end{remark}
\begin{example}
    Let $R=k[x_1,\ldots,x_n]_{(x_1,\ldots,x_n)}$, let $m$ be its maximal ideal, and let $B$ be a reduced monomial ideal. Then $\Ass(\Ext^i_R(R/B,R))$ is characterized using only the Betti numbers of $B^\vee$, the Alexander dual of $B$.
    Note that by local duality, $\Ext^i_R(R/B,R)$ is exactly the Matlis dual of $H^{n-i}_m(R/B)$. Hence, we may compute $d_x(DH^i_m(R/B))$ by combinatorial methods.
\end{example}

\section{\texorpdfstring{$S_r$}{S-r} property of generalized Cohen--Macaulay modules}\label{sec:generalizedCM}

\begin{definition}[{\protect\cite{NgoGMCM}}]\label{def:gCM}
    Let $(R,m,k)$ be a Noetherian local ring, and let $M$ be a finitely generated $R$-module. We say that $M$ is a \textbf{generalized Cohen--Macaulay module} if
    \begin{equation}
        \ell(H^i_m(M))<\infty
    \end{equation}
    for all $0\leq i<\dim M$, where $\ell$ denotes length.
\end{definition}

\begin{theorem}\label{thm:gCM}
    Let $M$ be a generalized Cohen--Macaulay module over $(R,m,k)$ with $\dim(M)=n$. Then $M$ is $S_r$, where $r=\depth(M)$.
    Conversely, assume in addition that $M$ is equidimensional and $R$ is a homomorphic image of a Gorenstein ring. If $M$ is $S_r$ for some $r\leq n$, then $\depth(M)\geq r$.
\end{theorem}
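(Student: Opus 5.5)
Both directions go through Serre depth, using Theorem \ref{thm:SerreDepth} to pass between $\Sdepth^r(M)=\dim(M)$ and the $S_r$ property.

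\emph{Forward direction.} Set $r=\depth(M)$ and aim to show $\Sdepth^r(M)=n$; Theorem \ref{thm:SerreDepth} then gives that $M$ is $S_r$. If $r=0$, I would read the statement as vacuous, or use Proposition \ref{prop:serre-bound} with the convention that $S_0$ is automatic, so assume $r\geq 1$. Now compute $\dim DH^j_m(M)$ for $j<n$ using the generalized Cohen--Macaulay hypothesis. Since $H^j_m(M)$ has finite length, so does its Matlis dual. Hence $\dim DH^j_m(M)$ is $0$ when $H^j_m(M)\neq 0$ and $-\infty$ otherwise. For $j<r=\depth(M)$ we have $H^j_m(M)=0$, so the inequality $\dim DH^j_m(M)\geq j-r+1$ fails. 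For $r\leq j<n$ the inequality would require $0\geq j-r+1\geq 1$, which is impossible, so it fails here too. By Remark \ref{rem:top}, $j=n$ does satisfy the inequality. Therefore $\Sdepth^r(M)=n$.

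\emph{Converse.} Assume $M$ is equidimensional, $R$ is a homomorphic image of a Gorenstein ring, and $M$ is $S_r$ with $r\leq n$. The converse part of Theorem \ref{thm:SerreDepth} gives $\Sdepth^r(M)=n$. So for every $j<n$ we have $\dim DH^j_m(M)<j-r+1$. Suppose, for contradiction, that $d=\depth(M)<r$. Then $d\leq r-1<n$, so $H^d_m(M)\neq 0$ and, by the generalized Cohen--Macaulay hypothesis, it has finite length. Thus $\dim DH^d_m(M)=0$. But $\Sdepth^r(M)=n$ forces $0<d-r+1\leq 0$, a contradiction. Hence $\depth(M)\geq r$.

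The argument is essentially bookkeeping once Theorem \ref{thm:SerreDepth} is available. The only delicate points I expect are two. First, the edge cases $r=0$ and $\depth(M)=n$ (the Cohen--Macaulay case, where everything is consistent). Second, the fact that a finite-length nonzero module has Krull dimension exactly $0$, which holds because its support is $\{m\}$.
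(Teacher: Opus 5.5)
Your proof is correct and matches the paper's argument: in both directions you pass through Theorem~\ref{thm:SerreDepth}, and for the forward direction you use the finite-length hypothesis to get $\dim DH^j_m(M)\in\{0,-\infty\}$ for $j<n$, which forces $\Sdepth^r(M)=n$ (you additionally make explicit the top index via Remark~\ref{rem:top} and the trivial case $r=0$). The only difference is in the converse: you argue by contradiction at $d=\depth(M)$ and use the generalized Cohen--Macaulay hypothesis to get $\dim DH^d_m(M)=0$, whereas the paper observes directly that $\dim DH^i_m(M)<i-r+1\le 0$ forces $DH^i_m(M)=0$ for all $i<r$, so that hypothesis is not actually needed there.
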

\begin{proof}
    We start with the first statement and assume that $r=\depth(M)$. By Theorem \ref{thm:SerreDepth}, it suffices to show that $\Sdepth^r(M)=n$. That is, $\dim DH^i_m(M)< i-r+1$ for all $i< n$, by Definition \ref{def:serre}.
    As $M$ is generalized Cohen--Macaulay, we have
    \begin{equation}
        \ell(H^i_m(M))<\infty
    \end{equation}
    for all $0\leq i<\dim M$.
    Taking Matlis duals sends finite length modules to finite length modules, hence
    \begin{equation}
        \ell(DH^i_m(M))<\infty.
    \end{equation}
    By the convention that the zero module has dimension $-\infty$ and the fact that a nonzero finite length module has dimension $0$,
    \[
     \dim(DH^i_m(M))=
     \begin{cases}
    0 &\text{if }H^i_m(M)\neq 0,\\
    -\infty &\text{if }H^i_m(M)=0.
    \end{cases}
    \]
    If $r\leq i<n$, then $\dim DH^i_m(M)\leq 0<i-r+1$. If $0\leq i<r$, then $H^i_m(M)=0$ because $r=\depth(M)$, and hence $\dim DH^i_m(M)=-\infty<i-r+1$. Combining the two cases, we have $\dim DH^i_m(M)< i-r+1$ for all $i<n$.

    For the converse statement, suppose that $M$ is $S_r$ and that $r\leq n$. Again by Theorem \ref{thm:SerreDepth}, we have $\Sdepth^r(M)=n$. For every $0\leq i<r$, we have $i-r+1\leq 0$ and, since $i<n$, the equality $\Sdepth^r(M)=n$ implies $\dim DH^i_m(M)<i-r+1$. Hence $DH^i_m(M)=0$, so $H^i_m(M)=0$, for all $0\leq i<r$. Therefore $\depth(M)\geq r$.
\end{proof}

\begin{remark}\label{rem:depth}
     The converse does not hold in general; that is, it is not always true that $M$ is $S_r$ for $r=\depth(M)$. See Example \ref{ex:equidim}.
\end{remark}

\section{Serre depth and liftable local cohomology}\label{sec:LLC}
\begin{definition}[{\protect\cite[Definition 1.1]{KollarKovacsDeformLC}}]
    Let $A$ be a Noetherian ring, and let $(T,n)$ be a Noetherian local $A$-algebra. We say that $T$ \textbf{has liftable local cohomology over $A$} if, for any Noetherian local $A$-algebra $(R,m)$ and nilpotent ideal $I \subseteq R$ such that $R/I \cong T$, the natural morphism on local cohomology
    \begin{equation}
        H^i_m(R)\rightarrow H^i_n(T)
    \end{equation}
    is surjective.
    For a scheme $X$ over $\Spec A$, we say that $X$ has \textbf{liftable local cohomology over $A$} if all local rings of $X$ have liftable local cohomology over $A$.
\end{definition}

\begin{proposition}\label{prop:LLC}
    Let $(T,n)$ have liftable local cohomology over a Noetherian ring $A$, and let $(R,m)$ be an $A$-algebra such that $R/I\cong T$ with $I$ a nilpotent ideal. Then, as $R$-modules,
    \[
        \Sdepth^r_R(R)\leq \Sdepth^r_R(T).
    \]
\end{proposition}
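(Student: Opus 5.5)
Surjectivity $H^i_m(R)\twoheadrightarrow H^i_n(T)$ becomes, after Matlis duality, an injection, which bounds dimensions in the right direction.

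First we identify the local cohomology of $T$ as an $R$-module. Since $I$ is nilpotent, $\sqrt{mT}=n$ and $m$ is the preimage of $n$. By independence of base for local cohomology, $H^i_n(T)\cong H^i_m(T)$ as $R$-modules. Thus the liftability hypothesis gives, for every $i$, a surjection of Artinian $R$-modules $H^i_m(R)\twoheadrightarrow H^i_m(T)$. The map is induced by $R\to R/I=T$, so it is $R$-linear.

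Next we apply $D=\Hom_R(\_,E_R(k))$, which is exact and contravariant. This gives injections of finitely generated $R$-modules $DH^i_m(T)\hookrightarrow DH^i_m(R)$. A submodule has Krull dimension at most that of the ambient module, so
\[
\dim DH^i_m(T)\leq \dim DH^i_m(R)\quad\text{for all } i,
\]
with the convention $\dim 0=-\infty$ respected. For $\Sdepth^r_R(T)$ we use the Matlis dual over $R$. It agrees with the one over $T$, since $E_T(k)=\Hom_R(T,E_R(k))$ and $\Hom_T(N,\Hom_R(T,E_R(k)))\cong\Hom_R(N,E_R(k))$. Krull dimension over $R$ or $T$ also agrees for $T$-modules. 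So this choice is harmless either way.

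Finally we compare the minima in Definition \ref{def:serre}. Put $j_0=\Sdepth^r_R(T)$; this is finite and at most $\dim T$ by Remark \ref{rem:top}. Then $\dim DH^{j_0}_m(T)\geq j_0-r+1$. The inequality above gives $\dim DH^{j_0}_m(R)\geq j_0-r+1$. Hence $j_0$ lies in the set whose minimum defines $\Sdepth^r_R(R)$, and so $\Sdepth^r_R(R)\leq j_0$.

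The only delicate point is the first step: making sure the surjection is an $R$-module map between $H^i_m(R)$ and $H^i_m(T)$, where the latter is computed with respect to $m$. This is the change-of-rings and independence-of-base identification for the finite map $R\to T$ with $\sqrt{mT}=n$. We also need $R$ to be local Noetherian, which is implicit in the definition of liftable local cohomology.
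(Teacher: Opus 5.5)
Your proposal is correct and follows the paper's proof: identify $H^i_n(T)\cong H^i_m(T)$ as $R$-modules by independence of base, dualize the liftability surjection $H^i_m(R)\twoheadrightarrow H^i_m(T)$ to an injection $DH^i_m(T)\hookrightarrow DH^i_m(R)$, and compare dimensions in the definition of Serre depth. Your explicit argument that $\Sdepth^r_R(T)$ lies in the defining set for $\Sdepth^r_R(R)$ adds some detail, and your remark on Matlis duals over $R$ versus $T$ (the content of Theorem \ref{thm:ringchange}) adds more, but the route is the same.
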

\begin{proof}
    By the definition of liftable local cohomology, we have surjections for all $i$:
    \begin{equation}
        H^i_m(R)\twoheadrightarrow H^i_n(T)
    \end{equation}
    We can also view $T$ as an $R$-module, and $H^i_m(T)\cong H^i_n(T)$ as $R$-modules; see \cite[Theorem 4.2.1]{BrodmannSharpLC}. Hence we have $R$-module surjections
    \begin{equation}
        H^i_m(R)\twoheadrightarrow H^i_m(T)
    \end{equation}
    Applying the Matlis dual $D:=\Hom_R(\_,E_R(k))$, we have
    \begin{equation}
        DH^i_m(T)\hookrightarrow DH^i_m(R)
    \end{equation}
    Hence
    \[
        \dim_R DH^i_m(T)\leq \dim_R DH^i_m(R).
    \]
    By Definition \ref{def:serre}, $\Sdepth^r_R(R)\leq \Sdepth^r_R(T)$.
\end{proof}

\begin{theorem}\label{thm:S_r}
    Let $(T,n)$ have liftable local cohomology over a Noetherian ring $A$, and let $(R,m)$ be an $A$-algebra such that $R/I\cong T$ with $I$ a nilpotent ideal. Assume in addition that $R$ is a homomorphic image of a Gorenstein ring and is equidimensional. If $R$ is $S_r$ over $R$, then $T$ is $S_r$ over $T$.
\end{theorem}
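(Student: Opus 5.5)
Since $R$ is $S_r$ over $R$, equidimensional, and a homomorphic image of a Gorenstein ring, the converse part of Theorem \ref{thm:SerreDepth} (applied with $M=R$) gives $\Sdepth^r_R(R)=\dim R$. Proposition \ref{prop:LLC} then yields $\Sdepth^r_R(T)\geq \Sdepth^r_R(R)=\dim R$. Because $I$ is nilpotent, $R$ and $T$ have the same prime spectrum, so $\dim_R T=\dim T=\dim R$. On the other hand, Proposition \ref{prop:serre-bound}(1) bounds $\Sdepth^r_R(T)$ above by $\dim_R T$. Hence $\Sdepth^r_R(T)=\dim_R T$, and the forward direction of Theorem \ref{thm:SerreDepth} shows that $T$ is $S_r$ as an $R$-module.

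It remains to pass from ``$T$ is $S_r$ as an $R$-module'' to ``$T$ is $S_r$ over $T$''. I expect this to be the only real point to check, and I would handle it by working with Serre depth over the two rings. The plan is to show $\Sdepth^r_R(T)=\Sdepth^r_T(T)$ and then apply the forward direction of Theorem \ref{thm:SerreDepth} over $T$.

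First, $H^i_m(T)\cong H^i_n(T)$ by the independence theorem already quoted in the proof of Proposition \ref{prop:LLC}. Second, for an Artinian $T$-module $H$, the Matlis dual over $R$ agrees with the one over $T$: we have $E_T(k)=\Hom_R(T,E_R(k))$, and adjunction gives $\Hom_R(H,E_R(k))\cong \Hom_T(H,E_T(k))$. So $D_R H^i_m(T)\cong D_T H^i_n(T)$ as $T$-modules. Third, the Krull dimension of a finitely generated $T$-module is the same whether it is computed over $R$ or over $T$, since its support is the same closed set of the common spectrum $\Spec R=\Spec T$. Together these give $\Sdepth^r_R(T)=\Sdepth^r_T(T)=\dim T$, and Theorem \ref{thm:SerreDepth} over $T$ shows that $T$ is $S_r$ over $T$.

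A more direct alternative is to argue from the definition of $S_r$. The localizations $T_p$ over primes of $R$ and of $T$ coincide. Depth of $T_p$ over $R_p$ equals its depth over $T_p$ by the same independence of local cohomology. Dimensions also agree. So $\depth T_p\geq\min(r,\dim T_p)$ holds over $R_p$ exactly when it holds over $T_p$. Either route finishes the proof.
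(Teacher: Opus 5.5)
Your proposal is correct and follows the paper's proof: you apply the converse of Theorem \ref{thm:SerreDepth} to $R$, then Proposition \ref{prop:LLC} together with the upper bound in Proposition \ref{prop:serre-bound} to get $\Sdepth^r_R(T)=\dim T$, and then the forward direction to conclude that $T$ is $S_r$ over $R$. For the passage from $R$ to $T$, your ``direct alternative'' is the paper's own argument (you use the independence theorem where the paper remarks on regular sequences), and your primary Serre-depth route is the ring-change comparison that the paper proves separately as Theorem \ref{thm:ringchange}, via the same identification $\Hom_R(T,E_R(k))\cong E_T(k)$.
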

\begin{proof}
    Notice that $T$ is $S_r$ over $T$ if and only if $T$ is $S_r$ over $R$: primes $p\subseteq R$ containing $I$ correspond to primes $\bar p\subseteq T$, and localization as an $R$-module agrees with localization as a $T$-module. Regular sequences behave well with respect to quotienting by a nilpotent ideal. Hence
    \begin{equation}
        \depth_R(T_p)\geq \min\{\dim_R T_p,r\}
    \end{equation}
    is equivalent to
    \begin{equation}
        \depth_T(T_{\bar p})\geq \min\{\dim_T T_{\bar p},r\}.
    \end{equation}
    Thus it is enough to prove that $T$ is $S_r$ as an $R$-module. By Theorem \ref{thm:SerreDepth}, under our assumptions, $R$ being $S_r$ is equivalent to $\Sdepth^r_R(R)=\dim R$. Also, $\dim T=\dim R$ since $T\cong R/I$ with $I$ nilpotent. Combining this with Proposition \ref{prop:LLC}, we obtain $\dim R=\Sdepth^r_R(R)\leq \Sdepth^r_R(T)$.
    By Proposition \ref{prop:serre-bound}, we know that the Serre depth is bounded above by the dimension of the ring. Hence $\Sdepth^r_R(T)=\dim R=\dim T$. Again by Theorem \ref{thm:SerreDepth}, we obtain that $T$ satisfies the $S_r$ property over $R$. By the discussion at the start, we see that $T$ is $S_r$ over $T$.
\end{proof}

\begin{corollary}\label{cor:CM}
    Let $A$ be a Noetherian ring, and let $X$ be a locally Noetherian Cohen--Macaulay scheme over $\Spec A$. Assume that for every point $x\in X$, the local ring $\mathcal O_{X,x}$ is a homomorphic image of a Gorenstein ring and is equidimensional, and that the reduced local ring
    \[
        \mathcal O_{X_{\mathrm{red}},x}\cong (\mathcal O_{X,x})_{\mathrm{red}}
    \]
    has liftable local cohomology over $A$ (In particular, this applies if X is either Du Bois in char 0 or F-pure in char p, cf. \cite{LLC}). Then $X_{\mathrm{red}}$ is Cohen--Macaulay.
\end{corollary}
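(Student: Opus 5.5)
The plan is to reduce the corollary to Theorem \ref{thm:S_r} applied at each point with $r$ taken large. Cohen--Macaulayness of a locally Noetherian scheme is a local condition, so it suffices to show that for every $x\in X$ the local ring $(\mathcal O_{X,x})_{\mathrm{red}}$ is Cohen--Macaulay. Fix $x$. Set $R=\mathcal O_{X,x}$ with maximal ideal $m$, and let $I$ be the nilradical of $R$. Since $R$ is Noetherian, $I$ is nilpotent. Put $T=R/I\cong \mathcal O_{X_{\mathrm{red}},x}$. Then $R$ is a local $A$-algebra via the structure morphism, $T$ has liftable local cohomology over $A$, and $R$ is a homomorphic image of a Gorenstein ring and is equidimensional. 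These are exactly the hypotheses of Theorem \ref{thm:S_r}.

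Next I will choose $r$ so that $S_r$ is equivalent to Cohen--Macaulayness. Take any $r\geq \max\{1,\dim R\}$; this is finite because $R$ is local Noetherian. Since $R$ is Cohen--Macaulay, every localization $R_p$ is Cohen--Macaulay, so $\depth R_p=\dim R_p\geq\min\{\dim R_p,r\}$. Hence $R$ is $S_r$ for every $r$. Theorem \ref{thm:S_r} then gives that $T$ is $S_r$ over $T$. Applying the $S_r$ inequality at the maximal ideal $n$ of $T$ yields
\[
\depth T\geq \min\{\dim T,r\}=\dim T,
\]
using $\dim T=\dim R\leq r$, since $I$ is nilpotent. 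Therefore $T$ is Cohen--Macaulay.

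As an alternative that avoids choosing $r$ by hand, I could argue through Serre depth. Proposition \ref{prop:serre-bound}(2) and Theorem \ref{thm:SerreDepth} with $r\geq\dim R$ give $\Sdepth^r_R(R)=\depth R=\dim R$. Proposition \ref{prop:LLC} then gives $\Sdepth^r_R(T)\geq\dim R$, hence $\depth_R T=\Sdepth^r_R(T)=\dim T$. Finally, depth over $R$ and over $T$ agree, as noted in the proof of Theorem \ref{thm:S_r}.

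The main point to check carefully is the convention for $S_r$ used in Theorem \ref{thm:SerreDepth}. I need to confirm that $S_r$ evaluated at the maximal ideal itself, with $r\geq\dim$, really forces $\depth=\dim$. The Serre-depth route sidesteps this, because Proposition \ref{prop:serre-bound}(2) directly identifies $\Sdepth^r$ with ordinary depth, so I would present that version as the primary argument.

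The parenthetical claim about Du Bois and F-pure singularities is not proved here. It is quoted from \cite{LLC}, where such reduced rings are shown to have liftable local cohomology (equivalently, to be cohomologically full).
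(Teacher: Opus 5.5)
Your proposal is correct. Your first argument is exactly the paper's proof. You apply Theorem~\ref{thm:S_r} at each point to $R=\mathcal O_{X,x}$ and $T=R_{\mathrm{red}}$, note that a Cohen--Macaulay $R$ is $S_r$ for every $r$, and conclude. Your explicit choice $r\geq\max\{1,\dim R\}$ and evaluation of the $S_r$ inequality at the maximal ideal spell out the paper's phrase ``$S_r$ for all $r$, equivalently Cohen--Macaulay.'' The paper's proof of Theorem~\ref{thm:S_r} uses the same convention $\depth T_p\geq\min\{\dim T_p,r\}$ at every prime, so your worry about conventions is unfounded.

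The Serre-depth variant you propose to make primary is a genuinely different route, and it is also correct. It bypasses Theorem~\ref{thm:S_r} and uses only Proposition~\ref{prop:serre-bound}(2), applied to both $R$ and $T$, together with Proposition~\ref{prop:LLC}. You cite Theorem~\ref{thm:SerreDepth} there, but it is not needed: Proposition~\ref{prop:serre-bound}(2) and $\depth R=\dim R$ already give $\Sdepth^r_R(R)=\dim R$.

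What your route buys is that it never uses the hypotheses that $\mathcal O_{X,x}$ is a homomorphic image of a Gorenstein ring and is equidimensional. In the paper's argument these enter only through the converse of Theorem~\ref{thm:SerreDepth}, invoked inside the proof of Theorem~\ref{thm:S_r}. The paper needs that converse to handle arbitrary $r$, but it is superfluous once $r\geq\dim R$.

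Unwound, your route is just this observation: the surjections $H^i_m(R)\to H^i_n(T)$, together with $H^i_m(R)=0$ for $i<\dim R=\dim T$, force $H^i_n(T)=0$ in that range. So your version shows the corollary holds without those two extra hypotheses. The paper's version instead presents the corollary as a one-line specialization of its main theorem.
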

\begin{proof}
    For each $x\in X$, apply Theorem \ref{thm:S_r} to $R=\mathcal O_{X,x}$ and $T=\mathcal O_{X_{\mathrm{red}},x}$. The kernel of the natural map $R\to T$ is the nilradical of $R$, which is nilpotent because $R$ is Noetherian. Since $X$ is Cohen--Macaulay, each $R$ is Cohen--Macaulay, hence $S_r$ for all $r$. The theorem therefore implies that each reduced local ring $T$ is $S_r$ for all $r$, equivalently Cohen--Macaulay. Thus $X_{\mathrm{red}}$ is Cohen--Macaulay.
\end{proof}

\begin{remark}
    In general, without the liftable local cohomology assumption, it is possible for a ring $R$ to be Cohen--Macaulay while its reduced structure is not Cohen--Macaulay. See the following example by Geyer \cite{G}.
\end{remark}

\begin{example}\label{ex:quotient}
    Let $k$ be a field of characteristic $2$, let
    \[
        S=k[x_1,x_2,x_3,x_4,x_5]_{(x_1,x_2,x_3,x_4,x_5)},
    \]
    let $f=x_5^2-x_1x_3^2$ and $g=x_4^2-x_1x_2^2$, and set $R=S/(f,g)$. Since $f$ and $g$ do not have common factors, they form a regular sequence. Hence $R$ is Cohen--Macaulay. Notice that
    \[
        (x_4x_5-x_1x_2x_3)^2
        =x_4^2x_5^2-x_1^2x_2^2x_3^2
        =(x_1x_3^2)(x_1x_2^2)-x_1^2x_2^2x_3^2=0
    \]
    in $R$, so $R$ is not reduced. By Macaulay2, we have
    \[
        \Rad(f,g)=(x_5^2-x_1x_3^2,x_4^2-x_1x_2^2,x_2x_5-x_3x_4,x_4x_5-x_1x_2x_3)=J,
    \]
    and $S/J$ is not Cohen--Macaulay.
\end{example}

When we take the quotient of a local ring by an arbitrary ideal instead of a nilpotent ideal as in the setting above, we still have good behavior with respect to Serre depth.
\begin{theorem} \label{thm:ringchange}
    Let $(T,n)$ and $(R,m)$ be local rings with $T\cong R/I$ for some ideal $I$, and let $M$ be a $T$-module. Then $\Sdepth^r_R(M)=\Sdepth^r_T(M)$.
\end{theorem}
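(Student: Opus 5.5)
We need to show that for each $j$, $\dim_R D_R H^j_m(M)=\dim_T D_T H^j_n(M)$. Once this holds for all $j$, the sets $\{j:\dim DH^j_m(M)\geq j-r+1\}$ computed over $R$ and over $T$ coincide, so the two Serre depths agree by Definition \ref{def:serre}. The plan is to compare the local cohomology modules first, then the Matlis duals, and finally the Krull dimensions.

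\textbf{Local cohomology.} By the independence theorem \cite[Theorem 4.2.1]{BrodmannSharpLC}, as already used in Proposition \ref{prop:LLC}, we have $H^j_m(M)\cong H^j_n(M)$ as $R$-modules, where the right side is viewed as an $R$-module via $R\to T$. In particular, $H^j_m(M)$ is annihilated by $I$.

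\textbf{Matlis duals.} Recall that $E_T(k)\cong \Hom_R(T,E_R(k))=(0:_{E_R(k)}I)$. For any $T$-module $N$, Hom-tensor adjunction then gives
\[
\Hom_T(N,E_T(k))\cong \Hom_T(N,\Hom_R(T,E_R(k)))\cong \Hom_R(N\otimes_T T,E_R(k))=\Hom_R(N,E_R(k)).
\]
Hence $D_T H^j_n(M)\cong D_R H^j_m(M)$ as $R$-modules, and both are annihilated by $I$. Since $H^j_m(M)$ is Artinian, these duals are finitely generated over $\widehat R$ (respectively $\widehat T$). This needs a remark: strictly, Matlis duals of Artinian modules are finitely generated only over the completion. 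The paper's convention measures Krull dimension over $R$, so I would interpret dimension as $\dim R/\Ann_R(N)$, which is fine for non-finitely-generated $N$.

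\textbf{Dimensions.} For an $R$-module $N$ annihilated by $I$ we have $\Ann_T(N)=\Ann_R(N)/I$, so $T/\Ann_T(N)\cong R/\Ann_R(N)$ as rings. Their Krull dimensions therefore agree, giving $\dim_R N=\dim_T N$. Applying this to $N=DH^j_m(M)$ for every $j$ yields the claim. The same argument works if dimension is instead taken via supports over completions, since $\widehat T=\widehat R/I\widehat R$ and supports correspond under $\Spec \widehat T\hookrightarrow \Spec\widehat R$. The zero module has dimension $-\infty$ on both sides.

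The main obstacle is expected to be the bookkeeping around the identification of injective hulls and the meaning of ``Krull dimension'' for the dual modules. The key points are that $E_T(k)=(0:_{E_R(k)}I)$ and the adjunction isomorphism above. The rest is formal.
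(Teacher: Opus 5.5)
Your proposal is correct and follows the paper's route. The independence theorem gives $H^j_m(M)\cong H^j_n(M)$, and Hom-tensor adjunction with $E_T(k)\cong\Hom_R(T,E_R(k))$ identifies the two Matlis duals; the paper additionally justifies that injective-hull identification by an essential-extension argument, which you simply recall. You also check explicitly that $\dim_R N=\dim_T N$ for modules annihilated by $I$, either via $T/\Ann_T(N)\cong R/\Ann_R(N)$ or via supports in $\Spec\widehat T\hookrightarrow\Spec\widehat R$. This fills in a step the paper leaves implicit in ``the result follows from the definition of Serre depth.''
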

\begin{proof}
    As in the proof of Proposition \ref{prop:LLC}, $H^i_m(M)\cong H^i_n(M)$ as $R$-modules. Now we compare the Matlis dual of $H^i_n(M)$ as a $T$-module with the Matlis dual of $H^i_m(M)$ as an $R$-module:
    \begin{align}
        &\Hom_R(H^i_m(M),E_R(k))\\
        \cong{} &\Hom_R(H^i_n(M)\otimes_T T,E_R(k))\\
        \cong{} &\Hom_T(H^i_n(M),\Hom_R(T,E_R(k)))\\
        \cong{} &\Hom_T(H^i_n(M),E_T(k)).
    \end{align}
    Here the first isomorphism uses $H^i_m(M)\cong H^i_n(M)$ and the identity $H^i_n(M)\cong H^i_n(M)\otimes_T T$, the second is hom-tensor adjunction, and the last uses
    \[
        \Hom_R(T,E_R(k))\cong E_T(k).
    \]
    Indeed,
    \[
        k\subseteq \Hom_R(T,E_R(k))\subseteq E_R(k)
    \]
    as $T$-modules, and the first inclusion is an essential extension because it is a subextension of the essential extension $k\subseteq E_R(k)$. Hence, by the characterization of injective hulls, $\Hom_R(T,E_R(k))\cong E_T(k)$. The result follows from the definition of Serre depth.
\end{proof}

\end{document}